\DeclareMathSymbol{\leqslant}{\mathalpha}{AMSa}{"36} 
\DeclareMathSymbol{\geqslant}{\mathalpha}{AMSa}{"3E} 
\DeclareMathSymbol{\eset}{\mathalpha}{AMSb}{"3F}     
\renewcommand{\leq}{\;\leqslant\;}                   
\renewcommand{\geq}{\;\geqslant\;}                   
\newcounter{mycount}
\newenvironment{romlist}{\begin{list}{\rm(\roman{mycount})}%
   {\usecounter{mycount}\labelwidth=1cm\itemsep 0pt}}{\end{list}}
\numberwithin{equation}{section}
\newtheorem{thm}[equation]{Theorem}
\newtheorem{prop}[equation]{Proposition}
\newtheorem{lem}[equation]{Lemma}
\theoremstyle{definition}
\newcommand{\cB}{\ensuremath{\mathcal B}}
\newcommand{\cC}{\ensuremath{\mathcal C}}
\newcommand{\cD}{\ensuremath{\mathcal D}}
\newcommand{\cE}{\ensuremath{\mathcal E}}
\newcommand{\cL}{\ensuremath{\mathcal L}}
\newcommand{\cS}{\ensuremath{\mathcal S}}
\newcommand{\bbE}{{\ensuremath{\mathbb E}} }
\newcommand{\bbL}{{\ensuremath{\mathbb L}} }
\newcommand{\bbR}{{\ensuremath{\mathbb R}} }
\newcommand{\bbZ}{{\ensuremath{\mathbb Z}} }
\newcommand{\gep}{\varepsilon}       
\newcommand{\gp}{\varphi} 
\newcommand{\gr}{\rho}
\newcommand{\gz}{\zeta}
\newcommand{\gl}{\lambda}
\newcommand{\gL}{\Lambda}
\newcommand{\gs}{\sigma}
\newcommand{\bra}{\langle}
\newcommand{\ket}{\rangle}
\newcommand{\grad}{\bar \nabla}
\newcommand{\Hess}{\text{Hess}\,}
\newcommand{\n}{\eta}
\newcommand{\pd}[1]{\partial_{#1}}
\newcommand{\oo}{\infty}
\newcommand{\Lg}{L_\gL}
\newcommand{\Lc}{\cL_\gL}
\newcommand{\Ls}{\tilde{L}}
\newcommand{\ns}{\tilde{\n}}
\newcommand{\sE}{\tilde{\cE}}
\newcommand{\bcdot}{\,\cdot\,}
\newcommand{\bk}[1]{\left\bra #1 \right\ket}
\newcommand{\bkz}[1]{\left\bra #1 \right\ket_\gr}
\begin{document}
\title{Helffer-Sj\"ostrand representation for conservative dynamics}
\author{T. Bodineau, B. Graham}
\address{D\'epartement de math\'ematiques et applications, Ecole Normale Sup\'erieure, UMR 8553,
75230 Paris cedex 05, France}
\begin{abstract}
\noindent
We consider a Helffer-Sj\"ostrand representation for the correlations in canonical Gibbs measures with convex interactions under conservative Ginzburg-Landau dynamics. We investigate the rate of relaxation to equilibrium.
\\

\noindent
{\bf Keywords} Ginzburg-Landau dynamics, canonical Gibbs measure, random walk representation, monotonicity.
\\

\noindent
{\bf Mathematics Subject Classification (2000)} 60K35, 82C24.
\end{abstract}
\thanks{We thank  T. Funaki, G. Giacomin, B. Helffer, S. Olla, H. Spohn for very helpful discussions.
TB acknowledges the support of the French Ministry of Education through the ANR BLAN07-2184264 grant. 
BG acknowledges the support of the Fondation Sciences Math\'ematiques de Paris.
}
\maketitle
\section{Introduction}
The complicated interactions in particle systems lead to subtle
correlations which in some cases can Êbe represented in term of
simpler quantities, e.g. Êrandom walk crossings \cite{FFS}. 
A celebrated representation for the correlations of Gibbs measures 
has been obtained by Helffer and Sj\"ostrand by means of the Witten
Laplacian \cite{H livre, HS}. In this representation, the decay of correlations is related to spectral properties and it can be
studied by using spectral theory \cite{H1,H2,H3}. 
For effective interface models, this representation triggered a probabilistic
reinterpretation of the Witten Laplacian as the generator of a
random walk coupled to the evolution of the particle system \cite{NS, DGI, GOS}. 
For a large class of effective interface models, the correlations between two sites $x,y$ can be
understood as the total time spent at $y$ by a random walk (in a random environment) starting from $x$. 
The strength of the Helffer-Sj\"ostrand representation is to relate the equilibrium
correlations to the behavior of the dynamics associated to the
model.

\medskip

In this paper, we investigate the Helffer-Sj\"ostrand representation
for the correlations in canonical Gibbs measures, i.e. measures conditioned to have a fixed mean density.
This relates the equilibrium correlations in canonical measures to the conservative Ginzburg-Landau dynamics.
In contrast to the non-conservative dynamics considered in the previous
works \cite{H livre, DGI}, fixing the total density leads to a
Witten Laplacian with a different Êstructure: the gradients are now replaced by gradient differences.
In one-dimension, we will show that for a class of Hamiltonian with
convex interactions, \eqref{eq: Hypothese}, the correlations for the Êcanonical
Gibbs measure can be interpreted as the occupation time of a random walk
evolving in a random environment  coupled to the conservative
Ginzburg-Landau dynamics. 
Furthermore, the space-time correlations of the dynamics are also encoded in the diffusive mechanism of the
random walk.Ê
For some specific dynamics, like the symmetric simple exclusion
process, a similar property for the correlations was obtained by duality \cite{Liggett}.Ê
In our model, the random walk is not a consequence of a duality property but it is reminiscent of the 
stochastic process considered in \cite{DGI}.

\medskip

We will use the Helffer-Sj\"ostrand representation to study  the relaxation of the one-dimensional conservative Ginzburg-Landau
dynamics.
Equilibrium fluctuations of the Ginzburg-Landau model and the convergence of the density field to a generalized Ornstein-Uhlenbeck process have been obtained in \cite{L,Z}.
The relaxation is also expected to occur at a microscopic scale:
for initial data $\n \in \bbR^\bbZ$ sampled from the equilibrium measure $\bkz{ \bcdot }$,
the space-time correlation is conjectured to obey the following scaling form
for large  $t$ and $|i|$ (\cite{Spohn} page 177 equation (2.14))
\begin{eqnarray}
\label{eq: relaxation}
\bkz{ \n_0(0) ; \n_i (t) }
\approx
\frac{\chi  (\gr) }{\sqrt{2 \pi \hat q (\gr) \,  t}}  \exp \left( - \frac{i^2}{2 t \; \hat q (\gr)} \right) \, ,
\end{eqnarray}
where $\hat q (\gr) $ is a diffusion coefficient and $\chi  (\gr) = \bkz{\n_0 ; \n_0}$ is the susceptibility.
The intuition behind the scaling \eqref{eq: relaxation} is that an initial fluctuation of the density at the origin will 
diffuse in the course of time. 
By combining localization techniques and spectral gap bounds, 
sharp relaxation estimates of the type \eqref{eq: relaxation}  for functions supported in a neighborhood of the origin were derived 
(in any dimension) for discrete models in \cite{BZ1,BZ2, CCR,CM,JLQY} and for continuous variables in \cite{LY}.

For continuous variable models, 
the Helffer-Sj\"ostrand representation provides an alternative way to understand \eqref{eq: relaxation} as it relates the correlation between the origin  and a site $i$ at time $t$ to the probability that the random walk starting at 0 touches the site $i$ at time $t$. 
This random walk evolves in a random environment coupled to the evolution of the particle system,
therefore its precise limiting properties are difficult to study. 
By using the general theory of Aronson, De Giorgi, Nash, Moser for uniformly elliptic second order operators, 
some estimates can be obtained on the kernel of this random walk. This leads to bounds on the correlation functions 
in \eqref{eq: relaxation} (see Section \ref{sec: Relaxation to equilibrium}).
In the spirit of \cite{GOS}, more global relaxation estimates can be obtained by using the homogenization theory of 
Kipnis, Varadhan \cite{KV} (see Section \ref{sec: Equilibrium fluctuations}).

\section{Ginzburg-Landau dynamics}

Let $\gL$ denote the one dimensional torus $\gL = (\bbZ/N\bbZ)$ with nearest neighbor edges.
We are going to study the relaxation properties of conservative Ginzburg-Landau dynamics
on $\bbR^\gL$
\begin{equation}
\label{SDE}
\forall i \in \gL, \quad
d \n_i (t) = \sum_{j =i \pm 1} \left( \frac{\partial H}{\partial \n_j} (\n) - \frac{\partial H}{\partial \n_i} (\n) \right)dt + \sqrt{2}  (d B_{(i,i+1)}(t)- d B_{(i-1,i)}(t)).
\end{equation}
where $(B_{(i,i+1)} (t))_{i \in \gL}$ denote independent standard Brownian motions associated to each edge and
we will consider Hamiltonians of the form
\begin{eqnarray}
\label{eq: Hamiltonian}
H (\n) = \sum_{i \in \gL} V_1(\n_i) + V_2(\n_i + \n_{i+1}).
\end{eqnarray}
We will require convexity assumptions on the potentials, i.e. that there are constants $C\pm$ such that for 
\begin{eqnarray}
\label{eq: Hypothese}
0<C_- \leq V_k''(\bcdot) \leq C_+, \qquad k=1,2.
\end{eqnarray}
The dynamics \eqref{SDE} can be extended on $\bbZ$ (see \cite{Fritz, Z}). 
In Sections \ref{sec: Relaxation to equilibrium} and \ref{sec: Equilibrium fluctuations},
we will investigate the relaxation properties in the infinite volume limit and  for this,
we will have to restrict to Hamiltonians without interactions, i.e.  $V_2 =0$.

\medskip

The Gibbs measure on $\bbR^\gL$ associated to $H$ will be denoted by $\mu_N$
and the canonical Gibbs measure with mean density $\gr$ by
\begin{eqnarray*}
\mu_{\gr,N} ( \bcdot) = \mu_{N} \bigg( \cdot \;  \bigg| \sum_i \n_i = |\gL| \gr \bigg) \, .
\end{eqnarray*}
We will write $\bk{\bcdot}$ to denote expectation with respect to $\mu_{\gr,N}$.

Let $\cS_{\gr,N} = \{ \n \in \bbR^\gL:\sum_i \n_i = |\gL| \gr \}$.
In $\gL$, the  dynamics \eqref{SDE} conserve the total density $\sum_{i \in \gL} \n_i$.
We will see in \eqref{ibp} that $\mu_{\gr,N}$ is a (reversible) invariant measure.

\bigskip

The generator of the dynamics \eqref{SDE} is given by
\begin{eqnarray*}
\Lg = \sum_{i \in \gL} -\left(\frac{\partial}{\partial \n_{i+1}}  - \frac{\partial}{\partial \n_i} \right)^2
+ \left( \frac{\partial H}{\partial \n_{i+1}}-\frac{\partial H}{\partial \n_i} \right) 
\left( \frac{\partial}{\partial \n_{i+1}} - \frac{\partial}{\partial \n_i} \right) \, .
\end{eqnarray*}
Let $\cB = \{(i,i+1)\}_{i \in \gL}$ denote the set of oriented nearest neighbor bonds of $\gL$.
It is natural to associate a differential operator with each edge.
For $b=(i,j)\in\cB$, we will write $\pd{b}$ to denote $\partial/\partial \n_j- \partial/\partial \n_i$. We will write $\grad$ to denote the vector of all such operators: $\grad= ( \pd{b} )_{b\in\cB}$.
The generator $\Lg$ can now be written:
\[
\Lg=-\grad\cdot\grad + \grad H \cdot \grad \, .
\]
Unless otherwise stated, take $\gr$ and $N$ to be fixed quantities. 
Note that there is an integration by parts formula
\begin{equation}
\label{ibp}
\bk{f \grad g} = \bk{- g \grad f} + \bk{ fg \grad H} \, ,
\end{equation}
where $\bk{\bcdot}$ denotes the expectation with respect to $\mu_{\gr,N}$.
Thus $\Lg$ is self-adjoint with respect to $\mu_{\gr,N}$ and the dynamics are reversible.
The operator $\Lg$ has a self-adjoint extension with domain included in $\bbL^2 ( \cS_{\gr,N}, \mu_{\gr,N} )$.

\section{Correlations for the canonical measure}

\subsection{Helffer-Sj\"ostrand representation}

We will derive a formula similar to the Helffer-Sj\"ostrand
representation \cite{HS, H1, H2, H3}  for the correlations under the  canonical 
Gibbs measure $\mu_{\gr,N}$.
We follow a formalism similar to the one applied previously for the non-conservative 
(Langevin) dynamics \cite{H livre}.

The correlation between two functions $f,g$ under the measure $\mu_{\gr,N}$ is defined by
\begin{eqnarray*}
\bk{ f ; g } = \bk{ (f - \bk{ f}) (g - \bk{ g })} \, .
\end{eqnarray*}
The operator $\Lg$ has a spectral gap (see \cite{LPY, Chafai} or the Appendix for an alternative proof) and thus 
for any smooth function $g$ in $\cC^\infty_0(\cS_{\gr,N}, \bbR )$ there exists a unique inverse $u$ in $\cC^\infty_0(\cS_{\gr,N}, \bbR )$ 
such that
\begin{eqnarray*}
\Lg u = g - \bk{g}\, .
\end{eqnarray*}
Using integration by parts \eqref{ibp}, one has:
\begin{eqnarray*}
\bk{ f ; g } = \bk{ (f - \bk{ f}) \Lg u }
= \cE (f,u) \, ,
\end{eqnarray*}
where the Dirichlet form $\cE$ is defined by
\begin{eqnarray}
\label{def:dirichlet_form}
\cE (f,u) = \bk{f \Lg u} = \bk{ \grad f \cdot \grad u}\,.
\end{eqnarray}
Let $\Hess H$ denote the (bond-wise) Hessian matrix 
$$
\Hess H
= [\pd{b}\pd{c} H]_{b,c\in\cB} \, .
$$
One has
\begin{equation}
\label{eq: commutateur}
\grad g = \grad \Lg u = \Lc \grad u 
\end{equation}
where $\Lc$ denotes the `Witten-Laplacian' operator defined on $\cC^\infty_0(\cS_{\gr,N} , \bbR^\cB)$ by,
\[
\Lc\, U(\n) = \Lg \otimes {\rm Id} \; U(\n) + (\Hess H)\cdot U(\n), \qquad U:\bbR^\gL \to \bbR^\cB \, . 
\]
Here $\Lg \otimes {\rm Id} $ denotes the identity matrix with diagonal elements equal to $\Lg$.
Note that the operator $\Lc$ is also self-adjoint in $\bbL^2 ( \cS_{\gr,N}, \mu_{\gr,N} )$.

Combining \eqref{def:dirichlet_form} and \eqref{eq: commutateur}, we therefore  have
for any $f,g$ in  $\cC^\infty_0(\cS_{\gr,N}, \bbR )$ 
\begin{equation}
\label{RWrep}
\bk{f;g}= \bk{ \grad f \, \Lc^{-1}\, \grad g }.
\end{equation}
We remark that the Witten Laplacian $\Lc$ and the identity \eqref{RWrep} could have been defined for more general Hamiltonians than \eqref{eq: Hamiltonian}, and in any dimension. In the Appendix, we exploit \eqref{RWrep} to estimate the spectral gap in dimension $d \geq 1$.
However the interpretation of $\Lc$ as the generator of stochastic dynamics 
(Section \ref{subsec: The random walk representation}) requires the assumption \eqref{eq: Hypothese} on the Hamiltonian $H$ and the one-dimensional structure.

\subsection{The random walk representation}
\label{subsec: The random walk representation}

Let $X(t)$ denote a continuous time random walk on the edges in $\cB$ with jump rates determined by the Hessian of H: $X(t)$ steps from $b$ to $c$ at rate $-\pd{b}\pd{c} H$. Thus if $X(t)=b=(i,i+1)$, the non-zero jump rate to the bond $b + k = (i+k, i+k+1)$ is given by (with $k \in \{-2,-1,1,2\}$)
\begin{eqnarray*}
\begin{array}{lll}
-\pd{b}\pd{b-2}H & = &V_2''(\n_i+\n_{i-1}),\\
-\pd{b}\pd{b-1}H & = &V_1''(\n_i), \\
-\pd{b}\pd{b+1}H & = &V_1''(\n_{i+1}), \\
-\pd{b}\pd{b+2}H & = &V_2''(\n_{i+1}+\n_{i+2}).
\end{array}
\end{eqnarray*}
Thus $\Lc$ can be interpreted as the generator of the joint evolution $(\n(t),X(t))$ in $\cS_{\gr,N} \times \cB$. On this space, the representation \eqref{RWrep} has a probabilistic interpretation.
\begin{prop}\label{Prop: RW}
Let $\bbE^\n_b$ denote the expectation for the joint process starting at $\n(0)=\n$ and $X(0)=b$.
\begin{eqnarray}
\label{RWrep2}
\bk{f;g} = \bk{ \grad f  \, \Lc^{-1} \grad g } =
\int_0^\infty \; \sum_{b,c \in \cB}
\bk{ \partial_b f (\n) \; \bbE^\n_b \big( 1_{X(t) = c} \,
\partial_{c} g (\n(t)) \big) } \, dt \, .
\end{eqnarray}
\end{prop}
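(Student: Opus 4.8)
The first equality in \eqref{RWrep2} is exactly \eqref{RWrep}, so the task is to rewrite $\Lc^{-1}\grad g$ as a time integral over the coupled process. The plan has three steps: identify $-\Lc$ with the Markov generator of $(\n(t),X(t))$ on $\cS_{\gr,N}\times\cB$, so that its semigroup $P_t=e^{-t\Lc}$ carries the probabilistic form $(P_tU)_b(\n)=\bbE^\n_b\big[U_{X(t)}(\n(t))\big]$; use self-adjointness and a spectral gap to write $\Lc^{-1}\grad g=\int_0^\infty P_t\,\grad g\,dt$; and pair with $\grad f$, unfolding the expectation.

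The heart of the argument is the generator identification. The diffusion \eqref{SDE} has generator $-\Lg$, with $\Lg=-\grad\cdot\grad+\grad H\cdot\grad$ nonnegative. With $\n$ frozen, $X(t)$ is a continuous-time jump process on the finite set $\cB$ with rates $-\pd b\pd c H$, which lie in $[C_-,C_+]$ by \eqref{eq: Hypothese}; since these rates are bounded, the walk — hence the coupled pair $(\n(t),X(t))$ — is non-explosive. The generator of the walk on $\phi:\cB\to\bbR$ is $\phi(b)\mapsto\sum_c(-\pd b\pd c H)(\phi(c)-\phi(b))$, and since $\sum_{b\in\cB}\pd b=\sum_{i\in\gL}(\partial_{i+1}-\partial_i)=0$ on the torus, every row of $\Hess H$ has vanishing sum, $\pd b\pd b H=-\sum_{c\neq b}\pd b\pd c H$, so this generator is precisely $-\Hess H$. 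Hence the generator of the coupled pair, acting on $U:\cS_{\gr,N}\to\bbR^\cB$, is $(-\Lg)\otimes{\rm Id}+(-\Hess H)=-\Lc$, and the Markov semigroup $P_t=e^{-t\Lc}$ satisfies $(P_tU)_b(\n)=\bbE^\n_b\big[U_{X(t)}(\n(t))\big]=\sum_{c\in\cB}\bbE^\n_b\big[1_{X(t)=c}\,U_c(\n(t))\big]$. This identification is where care is needed: one must check that the coupled diffusion/jump dynamics is well-posed and non-explosive (which follows from boundedness of the jump rates together with the existence of \eqref{SDE} on $\cS_{\gr,N}$ recalled earlier) and that $-\Lc$, a priori a differential-plus-difference operator, genuinely generates it on the relevant class of vector-valued functions.

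For the remaining steps, $\Lc$ is self-adjoint and nonnegative on $\bbL^2(\cS_{\gr,N},\mu_{\gr,N};\bbR^\cB)$, with kernel spanned by the constant field $\mathbf 1=(1,\dots,1)$ (since $\Lg$ kills constants and $\Hess H$ has vanishing row sums); every gradient field is orthogonal to this mode because $\sum_b\pd b g=0$ on the torus. On $(\ker\Lc)^\perp$ the operator $\Lc$ has a (finite-volume) spectral gap $\lambda>0$ — it differs from $\Lg\otimes{\rm Id}$, whose resolvent is compact by the convexity assumption \eqref{eq: Hypothese}, by the bounded operator $\Hess H$. Therefore $\Lc^{-1}\grad g=\int_0^\infty P_t\grad g\,dt$, the integral converging as an $\bbL^2$-valued Bochner integral with $\|P_t\grad g\|\leq e^{-\lambda t}\|\grad g\|$. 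Pairing with $\grad f\in\cC^\infty_0(\cS_{\gr,N},\bbR^\cB)$ and using $|\bk{\grad f\cdot P_t\grad g}|\leq e^{-\lambda t}\|\grad f\|\,\|\grad g\|$ to bring the expectation inside the $t$-integral (Fubini), we get $\bk{\grad f\cdot\Lc^{-1}\grad g}=\int_0^\infty\bk{\grad f\cdot P_t\grad g}\,dt$. Expanding $\bk{\grad f\cdot P_t\grad g}=\sum_b\bk{\pd b f(\n)\,(P_t\grad g)_b(\n)}$ via the probabilistic formula for $P_t$ — the sums over $b,c\in\cB$ being finite — yields precisely \eqref{RWrep2}.
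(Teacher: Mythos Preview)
Your proof is correct and follows essentially the same route as the paper: identify $e^{-t\Lc}$ with the semigroup of the coupled process $(\n(t),X(t))$, use a spectral gap to make $\int_0^\infty e^{-t\Lc}\grad g\,dt$ converge to $\Lc^{-1}\grad g$, and substitute into \eqref{RWrep}. The only noteworthy difference is how the decay of $e^{-t\Lc}\grad g$ is obtained: the paper observes that the gradient subspace is $\Lc$-invariant (via \eqref{eq: commutateur}) and invokes the explicit bound $\tilde\lambda\geq kC_-/N^2$ of Theorem~\ref{thm: spectral gap}, whereas you argue a spectral gap on all of $(\ker\Lc)^\perp$ through a compact-resolvent perturbation argument---correct, but relying on a fact (compactness of the resolvent of $\Lg$) that the paper does not establish, while the paper's route is self-contained.
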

With no extra effort, we can extend the Hamiltonian to include terms such as $V_3(\n_i+\n_{i+1}+\n_{i+2})$, $V_4(\n_i+\n_{i+1}+\n_{i+2}+\n_{i+3})$, and so on. In this case the random walk would perform jumps from $(i,i+1)$ of length $k$ with intensities $V_k^{\prime \prime}(\n_i+\n_{i-1} + \dots +\n_{i-k+1})$ and $V_k^{\prime \prime}(\n_{i+1}+\n_{i+2} + \dots +\n_{i+k})$.
For this class of models, the variables $\n$ can be reinterpreted as the gradient field of effective interface models and therefore the representation 
\eqref{RWrep2} is equivalent to the one derived for the non-conservative Ginzburg-Landau dynamics (see for example \cite[Proposition 2.2]{DGI}). 
However, looking directly at the non-conservative dynamics changes the point of view. Below, we derive new identities for the conservative Ginzburg Landau dynamics.

It does not seem possible to extend this random walk representation to dimensions $d\geq 2$. The matrix $- \Hess H$ cannot be interpreted (in an easy way) as generating a continuous time Markov chain: many of the off-diagonal elements are negative, so they cannot be interpreted as jump rates.

\begin{proof}[Proof of Proposition \ref{Prop: RW}] 
The evolution under the semi-group generated by $\Lc$ can be rewritten as follow.
For any $g$ in $\cC^\infty_0(\cS_{\gr,N}, \bbR )$
\begin{equation*}
e^{-t\Lc} \grad g(\n) =\left( \bbE_b^\n \left[ \sum_c 1_{X(t)=c} \, \pd{c} g(\n_t) \right] \right)_{b\in\cB} \, .
\end{equation*}
Applying $\Lc$ yields,
\begin{align*}
\Lc \int_0^T &e^{-t\Lc} \grad g(\n)\,dt 
= 
\Lc \left( \int_0^T \bbE_b^\n \left[  \sum_c \, 1_{X(t)=c} \; \pd{c} g(\n_t) \right] \,dt\right)_{b\in\cB}\\
&
=\grad g (\n) -  e^{- T \Lc} \grad g(\n) \, .
\end{align*}

The space of gradients is invariant under the operator $\Lc$, and therefore under the semi-group.
Theorem \ref{thm: spectral gap} implies that
\[
\bk{ \grad  g(\n) \;  e^{-t\Lc} \grad g(\n) } \leq  \exp \left( - \frac{k C_-}{N^2} t \right) \bk{ \big( \grad g(\n) \big)^2 }\, .
\]
Taking the limit $T \to \infty$ yields
\begin{align*}
\Lc \left( \int_0^\oo \bbE_b^\n \left[  \sum_c \, 1_{X(t) = c} \; \pd{c} g(\n_t) \right] \,dt\right)_{b\in\cB} =\grad g \, .
\end{align*}
Hence
\[
\Lc^{-1} \grad g= \left( \int_0^\oo \bbE_b^\n \left[  \sum_c 1_{X (t) =c} \;  \pd{c} g(\n_t)\,dt \right]  \right)_{b\in\cB} 
\]
can be substituted into \eqref{RWrep}.
\end{proof}

The joint dynamics including the field $\n(t)$ and the random walk $X(t)$ can be extended to $\bbZ$ (see e.g. \cite{Fritz}). 
In Section \ref{sec: Relaxation to equilibrium}, we will relate the relaxation to equilibrium in $\bbZ$ 
to the fluctuation of the random walk $X(t)$.

\section{Relaxation to equilibrium}
\label{sec: Relaxation to equilibrium}

In this section, we will consider the relaxation of the dynamics on $\bbZ$.
We focus only on systems with Hamiltonians $H (\n) = \sum_{x} V(\n_x)$ and with a potential $V$ satisfying 
the convexity assumption \eqref{eq: Hypothese}.
Let $\bkz{\bcdot}$ denote the product invariant measure on $\bbZ$ with mean density $\gr$.

\medskip

The random walk representation provides a clear connection between the relaxation of the particle system and the diffusive mechanism conjectured in
\eqref{eq: relaxation}.
\begin{prop}
\label{Prop: identity}
For any $i$ and $t \geq 0$, the following identity holds
\begin{eqnarray}
\label{eq: identity}
\bkz{ V' ( \n_0(0)) ; \n_i (t) }  = \bkz{ \bbE_{(0,1)}^\n \big( 1_{X(t) = (i,i+1)} \, \big)  } \, .
\end{eqnarray}
\end{prop}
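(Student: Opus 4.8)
The plan is to specialize the random walk representation \eqref{RWrep2} to the infinite-volume setting with $V_2=0$ and to the particular choice of test functions $f = V'(\n_0)$ and $g = \n_i$, then to run the dynamics for time $t$ instead of integrating over all time. First I would recall that, with $V_2 = 0$, the Hessian of $H$ is tridiagonal: $-\pd{b}\pd{b-1}H = V''(\n_i)$ and $-\pd{b}\pd{b+1}H = V''(\n_{i+1})$ for $b=(i,i+1)$, so the random walk $X(t)$ on $\cB \cong \bbZ$ is a nearest-neighbor walk in the (conserved) random environment $\n(t)$. Note also that $\grad$ applied to the single-site function $\n_i$ picks out just two bonds: $\pd{b}\n_i = \delta_{b,(i,i+1)} - \delta_{b,(i-1,i)}$, and similarly $\pd{b}(V'(\n_0)) = V''(\n_0)(\delta_{b,(0,1)} - \delta_{b,(-1,0)})$.

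The key step is to avoid the time integral. Rather than invoking \eqref{RWrep2} directly, I would use the intermediate identity from the proof of Proposition \ref{Prop: RW}, namely that
\[
e^{-t\Lc}\grad g(\n) = \Big( \bbE_b^\n \big[ \textstyle\sum_c 1_{X(t)=c}\,\pd{c} g(\n_t) \big] \Big)_{b\in\cB}\,,
\]
which is exactly the statement that $\Lc$ generates the joint Markov process $(\n(t),X(t))$ acting on gradient-valued functions. Then the space-time correlation can be written, using \eqref{eq: commutateur} (so that $\grad$ intertwines $e^{-t\Lg}$ on functions with $e^{-t\Lc}$ on gradients) and the self-adjointness of $\Lg$, as
\[
\bkz{ V'(\n_0(0)) ; \n_i(t) } = \bkz{ \big(V'(\n_0) - \bkz{V'(\n_0)}\big)\, e^{-t\Lg}\,\n_i } = \bkz{ \grad\big(V'(\n_0)\big) \cdot e^{-t\Lc}\,\grad \n_i }\,,
\]
where the first equality is the definition of the time-$t$ correlation under the stationary dynamics and the centering of $V'(\n_0)$ is harmless against a gradient. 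One subtlety worth a line of justification: $V'(\n_0)$ and $\n_i$ are not in $\cC^\infty_0$, so I would either note that $\grad (V'(\n_0))$ and $\grad \n_i$ are bounded (the latter constant, the former bounded by $C_+$) hence legitimately paired, or approximate by compactly supported functions and pass to the limit, and invoke the extension of the joint dynamics to $\bbZ$ referenced after Proposition \ref{Prop: RW}.

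It then remains to expand the inner product. Plugging in $\grad(V'(\n_0))$ and $e^{-t\Lc}\grad\n_i$ componentwise and using the integration-by-parts formula \eqref{ibp} to move the environment average onto a clean form, the right side becomes a sum over $b$ of $\bkz{ \pd{b}(V'(\n_0))\, \bbE_b^\n[\sum_c 1_{X(t)=c}\,\pd{c}\n_i(\n_t)] }$. Since $\pd{c}\n_i = 1_{c=(i,i+1)} - 1_{c=(i-1,i)}$ and $\pd{b}(V'(\n_0)) = V''(\n_0)(1_{b=(0,1)} - 1_{b=(-1,0)})$, there are at most four terms; the symmetry of the walk's transition kernel in the reversible measure (the walk is reversible with respect to the counting measure weighted appropriately, or more simply the left-right reflection symmetry of the stationary law) collapses the $b=(-1,0)$ contribution onto the $b=(0,1)$ one and the $c=(i-1,i)$ contribution onto the $c=(i,i+1)$ one, while the factor $V''(\n_0)$ is absorbed because starting the walk at bond $(0,1)$ already carries the correct initial weighting — indeed one checks that $V''(\n_0)$ is precisely the total jump rate conjugate to the ``source'' at bond $(0,1)$, so that $\bkz{V''(\n_0)\,\bbE_{(0,1)}^\n[\cdots]}$ telescopes to $\bkz{\bbE_{(0,1)}^\n[\cdots]}$ after the $\sum_b$ is performed. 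Carrying out this bookkeeping yields exactly $\bkz{\bbE_{(0,1)}^\n(1_{X(t)=(i,i+1)})}$.

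\medskip

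The main obstacle I anticipate is the bookkeeping in this last step: correctly tracking the signs and the factor $V''(\n_0)$ so that the four-term sum really does collapse to the single clean occupation-probability term, and making sure the reflection/reversibility symmetry is invoked legitimately in infinite volume. The functional-analytic point (non-compactly-supported $f,g$, infinite volume, existence of $e^{-t\Lg}$ on the stationary $\bbL^2$ space) is routine given the reversibility \eqref{ibp} and the spectral gap, but should be acknowledged.
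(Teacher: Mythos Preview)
There is a genuine gap at the ``key step to avoid the time integral''. The displayed identity
\[
\bkz{\big(V'(\n_0)-\bkz{V'(\n_0)}\big)\,e^{-t\Lg}\n_i}=\bkz{\grad V'(\n_0)\cdot e^{-t\Lc}\,\grad\n_i}
\]
is false. The Helffer--Sj\"ostrand representation says $\bk{f;h}=\bk{\grad f\cdot \Lc^{-1}\grad h}$, not $\bk{\grad f\cdot\grad h}$; the intertwining $\grad e^{-t\Lg}=e^{-t\Lc}\grad$ only lets you move $\grad$ past the semigroup, it does not remove the $\Lc^{-1}$. Concretely, the right-hand side above computes to $-\partial_t\,\bkz{\bbE^\n_{(0,1)}(1_{X(t)=(i,i+1)})}$, not to the occupation probability itself (check the Gaussian case $V(x)=x^2/2$: your formula gives $2(p_t(0)-p_t(1))\sim t^{-3/2}$, whereas the correct answer is $p_t(0)\sim t^{-1/2}$). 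The subsequent ``bookkeeping'' paragraph, in which $V''(\n_0)$ is supposed to be ``absorbed'' by a telescoping sum and a reflection symmetry, is therefore attempting to prove an identity that is off by a time derivative, and no amount of sign-tracking will close it.

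The paper's proof does \emph{not} avoid the time integral: it applies \eqref{RWrep2} to $f=V'(\n_0)$ and $P_tg$, obtaining $\int_t^\infty(\cdots)\,ds$. The specific choice $f=V'(\n_0)$ is what makes the integrand collapse: its gradient carries the factor $V''(\n_0)$, and after translation invariance the four-term combination is exactly $(\Hess H\cdot F_s)_{(0,1)}$. Since $\bk{\Lg F_s}=0$ under the invariant measure, this equals $\bk{(\Lc F_s)_{(0,1)}}=-\partial_s\bk{F_s((0,1),\n)}$, so the integral telescopes to $\bk{F_t((0,1),\n)}$ (minus a boundary term $|\gL|^{-1}$ that vanishes as $\gL\to\bbZ$). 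That recognition---that the four terms reconstitute $\Lc$ acting on $F_s$---is the missing idea in your proposal.
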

In the Gaussian case $V(x)=x^2/2$, the jump rates are uniform and the identity \eqref{eq: identity} coincides with the conjecture
\eqref{eq: relaxation}
\begin{eqnarray*}
\bkz{ \n_0(0) ; \n_i (t) } =\bbE_{(0,1)} \big( 1_{X(t) = (i,i+1)} \, \big) \simeq \frac{1}{(4 \pi t)^{1/2}} \exp \left( - \frac{i^2}{4 t} \right) \, ,
\end{eqnarray*}
where $X_t$ is just a simple random walk. 
A similar relation also holds for the space-time correlations of the symmetric simple exclusion process \cite{Liggett}.
For general potentials,  \eqref{eq: identity} confirms the conjecture \eqref{eq: relaxation} as it explicitly relates the relaxation to equilibrium 
to the relaxation of the random walk.

Relation \eqref{eq: identity} is non-symmetric, however  
upper and lower bound on the relaxation of the two-point correlation function can be obtained.
\begin{prop}
\label{Prop: bounds}
For any $i$ and $t \geq 0$, one has
\begin{equation}
\label{eq: bounds}
\frac{1}{C_+} 
\leq 
\frac{\bkz{ \n_0(0) ; \n_i (t) } }{\bkz{ \bbE_{(0,1)}^\n \big( 1_{X(t) = (i,i+1)} \, \big)  }}
\leq \frac{1}{C_-},
\end{equation}
where the constants $C_{\pm}$ were introduced in \eqref{eq: Hypothese}.
\end{prop}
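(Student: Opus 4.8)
The plan is to reduce \eqref{eq: bounds} to the FKG inequality for the product measure $\bkz{\bcdot}$, using Proposition~\ref{Prop: identity} together with the attractiveness of the conservative dynamics \eqref{SDE}. First I would use Proposition~\ref{Prop: identity} to rewrite the denominator of \eqref{eq: bounds} as a space--time correlation, $\bkz{\bbE_{(0,1)}^\n(1_{X(t)=(i,i+1)})}=\bkz{V'(\n_0(0));\n_i(t)}$. Introduce the function $w_t(\n):=\bbE_\n[\n_i(\n(t))]$, i.e.\ $w_t=e^{-t\Lg}\n_i$, so that $\bkz{w_t}=\bkz{\n_i}=\gr$ by stationarity. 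For any $F$ for which the expressions make sense one then has, by stationarity of the process,
\[
\bkz{F(\n_0(0));\n_i(t)}=\bkz{F(\n_0);w_t}\,.
\]
By \eqref{eq: Hypothese} the maps $x\mapsto V'(x)-C_-x$ and $x\mapsto C_+x-V'(x)$ are non-decreasing, as is $x\mapsto x$. Hence, once I know that $\bkz{F(\n_0);w_t}\geq0$ for every non-decreasing $F$, I obtain
\[
C_-\,\bkz{\n_0(0);\n_i(t)}\;\leq\;\bkz{V'(\n_0(0));\n_i(t)}\;\leq\;C_+\,\bkz{\n_0(0);\n_i(t)}\,,
\]
and dividing by $\bkz{\n_0(0);\n_i(t)}$, which is $>0$ for $t>0$ by irreducibility of the walk $X$ (the case $t=0$, $i=0$ being the Brascamp--Lieb bound on $\chi(\gr)$), yields exactly \eqref{eq: bounds}.

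It then remains to prove $\bkz{F(\n_0);w_t}\geq0$ for non-decreasing $F$, which I would do by combining two ingredients. (i) \emph{Attractiveness}: if $\n(0)\leq\n'(0)$ coordinatewise and both evolve under \eqref{SDE} driven by the same Brownian motions, then $\n(t)\leq\n'(t)$ for all $t$. This is the comparison principle for SDE systems with quasi-monotone drift and common additive noise: with $H=\sum_x V(\n_x)$ the drift is $b_i(\n)=V'(\n_{i-1})+V'(\n_{i+1})-2V'(\n_i)$, whose off-diagonal derivatives $\partial b_i/\partial\n_{i\pm1}=V''(\n_{i\pm1})$ are $\geq C_->0$ by \eqref{eq: Hypothese}, and the noise coefficients in \eqref{SDE} are constant. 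Consequently $w_t$, being the expectation of the non-decreasing observable $\n_i(\n(t))$, is a non-decreasing function of the configuration. (ii) \emph{FKG}: since $V_2=0$ the measure $\bkz{\bcdot}$ is a product measure, so $\bkz{\Phi;\Psi}\geq0$ whenever $\Phi,\Psi$ are both non-decreasing; applying this with $\Phi=F(\n_0)$ and $\Psi=w_t$ closes the argument.

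The main obstacle is step (i) in infinite volume: justifying the comparison principle for \eqref{SDE} on $\bbZ$. I would first establish it on the torus $\gL=\bbZ/N\bbZ$, where it is the classical comparison theorem for cooperative SDE systems with common additive noise, and then pass to the limit $N\to\infty$ using the known well-posedness and continuity of the infinite-volume dynamics (cf.\ \cite{Fritz,Z}). It is precisely here that the hypothesis $V_2=0$ enters: a genuine convex interaction $V_2$ would contribute mixed second derivatives of $H$ to the drift with an unfavourable sign, destroying quasi-monotonicity and hence attractiveness. The remaining technical points---finiteness of the relevant $\bbL^2$-norms under $\bkz{\bcdot}$, which follows from $C_-\leq V''\leq C_+$ forcing at most Gaussian tails, and the strict positivity of $\bkz{\n_0(0);\n_i(t)}$ for $t>0$---are routine.
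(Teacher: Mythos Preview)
Your proposal is correct and follows essentially the same route as the paper: both combine Proposition~\ref{Prop: identity} with a monotonicity/FKG argument applied to the non-decreasing comparison functions $\xi\mapsto V'(\xi)-C_-\xi$ and $\xi\mapsto C_+\xi-V'(\xi)$. The paper packages attractiveness and FKG into a single space--time positive-association statement (Lemma~\ref{lem:monotonicity}, proved by coupling the initial data via Preston's FKG and then controlling the difference process with a weighted Gr\"onwall argument on $\bbZ$) rather than going through your $w_t=e^{-tL}\n_i$ and static product FKG, but this is only a packaging difference; one cosmetic point is that it is cleaner to divide by the manifestly positive denominator $\bkz{\bbE_{(0,1)}^\n(1_{X(t)=(i,i+1)})}$ rather than by $\bkz{\n_0(0);\n_i(t)}$.
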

The previous estimates \eqref{eq: bounds} can be turned into quantitative bounds 
using Aronson 
estimates for the transition kernel 
of strictly elliptic operators. Following \cite{GOS}, there exists $c_1,c_2$ such that  for $t>1$
\begin{eqnarray}
\label{eq: Aronson upper}
\bkz{ \bbE_{(0,1)}^\n \big( 1_{X(t) = (i,i+1)} \, \big) } 
\leq \frac{c_1}{\sqrt{t}} \exp \left( - \frac{|i|}{c_1 \sqrt{t}} \right) \, ,
\end{eqnarray}
and for $|i| \leq \sqrt{t}$
\begin{eqnarray}
\label{eq: Aronson lower}
\bkz{ \bbE_{(0,1)}^\n \big( 1_{X(t) = (i,i+1)} \, \big) } 
\geq \frac{c_2}{1 \vee \sqrt{t}} \, .
\end{eqnarray}

\begin{proof}[Proof of Proposition \ref{Prop: identity}]

First we will prove \eqref{eq: identity} for the dynamics in a finite domain $\gL$ and then we will pass to the limit $\gL \to \bbZ$ for a fixed time $t$.

Given two functions $f$ and $g$, and a time $t\geq 0$, we can apply formula \eqref{RWrep2} to $f$ and $e^{-t\Lg} g = P_t (g) $ which is the semi-group at time $t$ for the Ginzburg-Landau dynamics
\begin{eqnarray*}
\bk{ f(\n) ; e^{-t\Lg} g(\n)}
&=& 
\bk{ f(\n) ; P_t(g) (\n) } \\
&=&
\int_t^\infty \; \sum_{b,c \in \cB} \bk{ \partial_b f (\n) \; \bbE^\n_b \big( 1_{X(s) = c} \, \partial_{c} g (\n(s))  \big) } \, ds \, ,
\end{eqnarray*}
where $\bk{ \bcdot}$ refers to the finite volume measure $\mu_{\gr,N}$ with the canonical constraint.
For $f(\n)=V'(\n_0)$ and $g(\n)=\n_i$, this gives
\[
\grad f = \left\{
\begin{array}{rrrl}
\partial_b f &=&  V''(\n_0), & \qquad \text{if} \ b = (-1,0),\\
\partial_b f &=& -V''(\n_0), & \qquad \text{if} \ b = (0,1),\\
\partial_b f &=&  0, & \qquad \text{otherwise},\\
\end{array}
\right.
\]
and
\[
\grad g = \left\{
\begin{array}{rrrl}
\partial_b g &=&  1, & \qquad \text{if} \ b = (i-1,i),\\
\partial_b g &=& -1, & \qquad \text{if} \ b = (i,i+1),\\
\partial_b g &=&  0, & \qquad \text{otherwise.}\\
\end{array}
\right.
\]
Thus, 
\begin{eqnarray*}
&& \bk{ V' ( \n_0(0)) ; \n_i (t) }=\\
&& \int_t^\infty \, ds \;
\bk{ V'' ( \n_0)  \bbE^\n_{(-1,0)} \big( 1_{X(s) = (i-1,i)} \, \big) }  
+ \bk{  V'' ( \n_0) \bbE^\n_{(0,1)} \big( 1_{X(s) = (i,i+1)} \, \big) }  \\
&& \qquad - 
\bk{ V'' ( \n_0) \bbE^\n_{(-1,0)} \big( 1_{X(s) = (i,i+1)} \, \big) }  
- \bk{  V'' ( \n_0) \bbE^\n_{(0,1)} \big( 1_{X(s) = (i-1,i)} \, \big) }  \,.
\end{eqnarray*}

We introduce the function
\begin{eqnarray*}
F_s (b,\n) = \bbE_{b}^\n \big( 1_{X(s) = (i,i+1)} \, \big)
\end{eqnarray*}
which is the probability that the walk starting at the edge $b$ in an initial field $\eta$ is located at $(i,i+1)$ at time $s$.

Using translation invariance,
\begin{eqnarray*}
\bk{ V' ( \n_0(0)) ; \n_i (t) }
&=& \int_t^\infty \, ds \;
\bk{ V'' ( \n_1) \Big[ F_s ((0,1),\n) - F_s ((1,2),\n)  \Big] }    \\
&& \qquad 
+ \bk{  V'' ( \n_{0}) \Big[ F_s ((0,1),\n) - F_s ((-1,0),\n)\Big] } \, .
\end{eqnarray*}
As $\bk{ \bcdot }$ is the invariant measure, $\bk{ \Lg F_s ((0,1),\n)  } = 0$, and so
\begin{eqnarray}
\bk{ V' ( \n_0(0)) ; \n_i (t) } 
&=& \int_t^\infty \bk{ [ \Lc  F_s] \;  ((0,1),\n)  } ds  =  - \int_t^\infty \partial_s \bk{ F_s ((0,1),\n)} ds  \nonumber \\
&=& \bk{ F_t ((0,1),\n)  } - \lim_{s \to \infty} \bk{ F_s ((0,1),\n)  }.
\label{eq: finite volume}
\end{eqnarray}
The random walk is uniformly distributed in the limit $s\to\oo$, and so $\bk{ F_s ((0,1),\n)  }\to |\gL|^{-1}$ where $|\gL|$ is the number of sites in $\gL$. 

For any fixed time $t>0$, one can take the limit $\gL \to \bbZ$ in \eqref{eq: finite volume}.
This concludes the proof of \eqref{eq: identity}.
\end{proof}

\begin{proof}[Proof of Proposition \ref{Prop: bounds}]
The dynamics have a monotonicity property.
\begin{lem}
\label{lem:monotonicity}
Let $f,g:\bbR^\bbZ\to\bbR$ be two locally defined functions that are non-decreasing with respect to the coordinatewise partial order. 
For $t\geq 0$, $\bkz{f(\n(0));g(\n(t))}\geq 0$.
\end{lem}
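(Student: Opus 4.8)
The plan is to establish the FKG-type inequality $\bkz{f(\n(0));g(\n(t))}\ge 0$ for non-decreasing local functions by showing that the Ginzburg--Landau semigroup $P_t$ preserves monotonicity, i.e.\ that if $g$ is non-decreasing then so is $P_t g$, and then combining this with the fact that the equilibrium measure $\bkz{\bcdot}$ itself satisfies the FKG inequality (which follows from the convexity assumption \eqref{eq: Hypothese} via the Holley criterion, the Hamiltonian $H(\n)=\sum_x V(\n_x)$ being a product form with log-concave marginals). Indeed, once $P_t$ is monotonicity-preserving, $g(\n(t))$ has the same expectation structure as a non-decreasing function under the equilibrium measure, so
\[
\bkz{f(\n(0));g(\n(t))} = \bkz{f(\n(0));(P_tg)(\n(0))} \ge 0
\]
by the equilibrium FKG inequality applied to the two non-decreasing functions $f$ and $P_tg$.

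The core step — monotonicity preservation of $P_t$ — I would prove by a coupling argument. First I would work in finite volume $\gL$ on $\cS_{\gr,N}$ (conservative dynamics), construct two copies $\n(t)$ and $\n'(t)$ of the diffusion \eqref{SDE} driven by the \emph{same} family of Brownian motions $(B_{(i,i+1)})$, started from ordered initial conditions $\n(0)\le\n'(0)$ (coordinatewise), and show the order is preserved for all $t$: $\n(t)\le\n'(t)$ almost surely. The key analytic input is that the drift in \eqref{SDE} is \emph{quasi-monotone} (cooperative): writing the drift of the $i$-th coordinate as $b_i(\n)=\sum_{j=i\pm1}(V'(\n_j)-V'(\n_i))$, one checks $\partial b_i/\partial\n_j\ge 0$ for $j\ne i$ (here $\partial b_i/\partial \n_{i\pm1}=V''(\n_{i\pm1})\ge C_->0$), which is exactly the condition under which a synchronous coupling of the SDE preserves the coordinatewise order. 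The conservative noise term is linear and identical in both copies, so it drops out of the difference; standard comparison arguments (e.g.\ via a Gronwall estimate on $\sum_i(\n_i-\n'_i)^+$, or the reflection argument of Ikeda--Watanabe) then give $\n(t)\le\n'(t)$. From this, $(P_tg)(\n(0))\le (P_tg)(\n'(0))$ whenever $g$ is non-decreasing, so $P_tg$ is non-decreasing. Finally I would pass to the infinite-volume limit on $\bbZ$: for a locally supported non-decreasing $g$, finite-volume approximations converge (using the construction of the infinite-volume dynamics in \cite{Fritz}, and the fact that correlations and monotonicity are preserved under the limit since $f,g$ are local), yielding the claim on $\bbZ$.

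I expect the main obstacle to be making the coupling argument fully rigorous for the \emph{conservative} dynamics with its degenerate, edge-indexed noise. Unlike the gradient (non-conservative) Ginzburg--Landau dynamics where each coordinate gets an independent Brownian motion, here the noises $\sqrt2(dB_{(i,i+1)}-dB_{(i-1,i)})$ are correlated across sites and the diffusion matrix is the (degenerate) graph Laplacian rather than the identity. Nonetheless, because the noise coefficient matrix is \emph{constant} (does not depend on $\n$), the difference process $\n(t)-\n'(t)$ satisfies a closed ODE-with-random-coefficients: $d(\n_i-\n'_i)=\big(b_i(\n)-b_i(\n')\big)dt$, with no martingale part. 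Writing $b_i(\n)-b_i(\n')=\sum_j a_{ij}(t)(\n_j-\n'_j)$ with $a_{ij}(t)=\int_0^1 \partial_j b_i(\n'+r(\n-\n'))\,dr$, the off-diagonal entries $a_{ij}(t)\ge 0$ and each row sums to $0$ minus a positive quantity only through the diagonal; this is a (time-dependent, substochastic) $Q$-matrix up to sign, so $e^{\int_0^t a(s)ds}$ applied to the initial difference preserves non-negativity of each coordinate — giving $\n(t)\le\n'(t)$ cleanly. The remaining routine points are the existence/uniqueness of strong solutions on $\cS_{\gr,N}$ (standard, drift is smooth with bounded derivatives by \eqref{eq: Hypothese}) and justification of the infinite-volume limit, neither of which I would belabor.
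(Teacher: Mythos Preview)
Your approach is essentially the paper's: both establish order preservation by synchronously coupling two solutions with the same Brownian motions, exploit that the constant noise cancels so the difference solves a cooperative ODE (off-diagonal Jacobian entries $V''(\cdot)\ge C_->0$), and then combine this with FKG for the equilibrium product measure at time zero. One small correction: your finite-volume coupling must be run on $\bbR^\gL$ (comparing trajectories on different hyperplanes), not on a single leaf $\cS_{\gr,N}$, since two coordinatewise-ordered configurations with the same total mass are necessarily equal; the paper sidesteps this by working directly on $\bbZ$ and controlling the infinite-dimensional comparison via a weighted Gronwall estimate on $A(t)=\sum_i 2^{-|i|}\gp_i(t)^2\,1_{\{\gp_i(t)<0\}}$ with $\gp=\n-\ns$.
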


We postpone the proof of the Lemma and first use it to deduce Proposition \ref{Prop: bounds}.
The function $h(\xi)= \frac{V'(\xi)}{C_-}  - \xi$ is non-decreasing. By Lemma \ref{lem:monotonicity},
\[
0\leq \bkz{h(\n_0(0));\n_i(t)}.
\]
Hence, by the bilinearity of covariances,
\[
\bkz{\n_0(0) ; \n_i(t)} \leq \frac{1}{C_-} \bkz{V'(\n_0(0)) ; \n_i(t)}
=
\frac{1}{C_-} \bkz{ \bbE_{(0,1)}^\n \big( 1_{X(t) = (i,i+1)} \, \big)  }.
\]
The lower bound follows in the same way.
\end{proof}

\medskip

Monotonicity properties for non-conservative dynamics have been considered in \cite{Funaki}.
We provide below an alternative proof tailored for our model.

\begin{proof}[Proof of Lemma \ref{lem:monotonicity}]
By a standard approximation argument, it is sufficient to show that for locally defined, increasing events $A$ and $B$,
\begin{equation}\label{approximation argument}
\mu_\gr(\n(0)\in A \text{ and } \n(t)\in B)\geq \mu_\gr(A)\mu_\gr(B).
\end{equation}
Let $\n,\ns$ represent two solutions to the SDE \eqref{SDE} such that
\begin{romlist}
\item $\ns(0)$ has distribution $\mu_\gr$, and
\item $\n(0)$ has distribution $\mu_\gr(\,\cdot\mid A)$.
\end{romlist}
We will write $\ns(t)\leq \n(t)$ if $\ns_i(t)\leq \n_i(t)$ for all $i\in\bbZ$.
Note that \eqref{approximation argument} holds if with probability one, $\ns(t)\leq \n(t)$.

The product measure $\mu_\gr$ only differs from the conditional measure $\mu_\gr(\,\cdot\mid A)$ on the support of $A$. By Preston's FKG inequality \cite[Theorem 3]{Preston}, $\mu_\gr(\,\cdot\mid A)$ stochastically dominates $\mu_\gr$. In other words, there is a probability measure under which $\ns(0)\leq \n(0)$ almost surely. We can then let $\ns$ and $\n$ evolve with the same driving noise $(B_b)_{b\in\cB}$. 

Let $\gp(t)=\n(t)-\ns(t)$ so that $\gp(0)\geq 0$. Note that if $\gp_i(t)<0$, then
\[
\frac{d}{dt}\gp_i(t) \geq \sum_{j\sim i} V'(\n_j(t)) - V'(\ns_i(t)) \geq \sum_{\{j\sim i\,:\, \gp(j)<0\}} C_+\gp_j(t).
\] 
By \cite[Theorem 2.1]{Z}, for any $r>0$, $\n$ and $\ns$ lie in the Hilbert space
\[
L_r^2=\{\gz :\sum_{i\in\bbZ} \, \gz_i^2 \exp(-r|i|)\leq \oo\}.
\]
We can therefore define
\[
A(t)=\sum_{i\in\bbZ} 2^{-|i|} \gp_i(t)^2 1_{\{\gp_i(t)<0\}},
\]
and calculate
\[
\frac{d}{dt}A(t) \leq \sum_{\{i\sim j\,:\,\gp_i(t),\gp_j(t)<0\}} 2^{1-|i|}C_+ \gp_i(t)\gp_j(t).
\]
Checking that for any $\gp_{i-1},\gp_i,\gp_{i+1}\in\bbR$ that
\[
2^{-|i|} \gp_i(\gp_{i-1}+\gp_{i+1}) \leq 2^{-|i-1|} \gp_{i-1}^2+2^{-|i|} \gp_{i}^2+2^{-|i+1|} \gp_{i+1}^2,
\]
we obtain $dA(t)/dt \leq 6C_+A(t)$
Applying Gronwall's lemma with
\[
A(0)=0, \qquad A(t) \leq \int_0^t 6C_+ A(s) ds,
\]
we find $A(t)=0$ for all $t\geq 0$.
\end{proof}

\section{Diffusion coefficient}
\label{sec: Equilibrium fluctuations}

In Propositions \ref{Prop: identity} and \ref{Prop: bounds}, the relaxation to equilibrium was rephrased in terms of the diffusion of the random walk $X(t)$.
We compute below a Central Limit Theorem for this random walk.

We consider the Ginzburg-Landau dynamics on $\bbZ$ starting from the equilibrium measure $\bkz{\bcdot}$ at density $\gr$.
We will show that after rescaling, the random walk $X(t)$ converges to a Brownian motion. With $\gep>0$, let $X^\gep(t)=\gep X(\gep^{-2} t)$ for $t\geq 0$. Let $Y(t)$ denote a Brownian motion with $Y(0)=0$ and variance $q (\gr) >0$ (defined in \eqref{eq:q}),
\[
\bbE_0 [Y(s)Y(t)]=(s \wedge t) \, q(\gr).
\]
Let $R_1=V''(\n_1)$ (respectively, $R_{-1}=V''(\n_0 )$) denote the jump rate of the random walk $X$ from $(0,1)$ to $(1,2)$ (respectively, $(-1,0)$).
For $i\in\bbZ$, let $\tau_i$ denote the shift operator that moves vertex $i$ to $0$,
\[
\tau_i \n_j= \n_{i+j}, \qquad j\in\bbZ.
\]
\begin{thm}
\label{thm:homog}
Over any finite time interval $[0,T]$, as $\gep\to0$, $X^\gep(t)\to Y(t)$ weakly in the Skorohod space, with
\begin{equation}
\label{eq:q}
q (\gr) =2 \inf_f \Big\{\bkz{  (1-f(\n)+f(\tau_1 \n))^2 R_1}  + \cE(f,f) \Big\}.
\end{equation}
The infimum is taken over smooth, bounded local functions $f$. By abuse of notation, $\cE(f,f)$ stands for the Dirichlet form \eqref{def:dirichlet_form} extended to $\bbZ$.
\end{thm}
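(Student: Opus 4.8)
\emph{Step 1: the environment seen from the walk.}
The plan is to apply the martingale/corrector method of Kipnis and Varadhan \cite{KV} to the ``environment viewed from the random walk'', in the spirit of \cite{GOS}. Identifying a bond $(i,i+1)$ with the site $i$, set $\go(t)=\tau_{X(t)}\n(t)\in\bbR^\bbZ$, the field seen from the current position of the walk. Under the joint dynamics $(\n(t),X(t))$, the process $\go(t)$ is Markovian; its generator $\frA$ is the image under the shift of $-\Lc$, and splits as a Ginzburg--Landau part (which acts as $-\Lg$ on functions of $\go$, by translation invariance of \eqref{SDE} on $\bbZ$) plus the jump part $F\mapsto R_1(\go)\big(F(\tau_1\go)-F(\go)\big)+R_{-1}(\go)\big(F(\tau_{-1}\go)-F(\go)\big)$ with $R_1(\go)=V''(\go_1)$, $R_{-1}(\go)=V''(\go_0)$. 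Since $\bkz{\bcdot}$ is a translation invariant product measure, the Ginzburg--Landau part is reversible for it (the integration by parts \eqref{ibp} extends to $\bbZ$) and the jump part satisfies detailed balance; hence $\bkz{\bcdot}$ is reversible for $\frA$, and $-\frA$ is self-adjoint and nonnegative on $L^2(\bkz{\bcdot})$, with Dirichlet form $\bkz{g(-\frA)g}=\cE(g,g)+\bkz{R_1\,(g\circ\tau_1-g)^2}$. Ergodicity of $\go(t)$ I would deduce from that of the infinite volume Ginzburg--Landau dynamics at fixed density.

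\emph{Step 2: drift, gradient structure, corrector.}
The compensated displacement $X(t)-\int_0^t\phi(\go(s))\,ds$ is a martingale, with drift $\phi(\go)=R_1(\go)-R_{-1}(\go)=V''(\go_1)-V''(\go_0)$. The key point is that $\phi$ is of \emph{gradient form}, $\phi=h-h\circ\tau_{-1}$ with $h(\go)=V''(\go_1)$: hence $\bkz{\phi}=0$, and since $\bkz{\phi g}=\bkz{h\,(g-g\circ\tau_1)}$ while $R_1\geq C_-$, one gets $\bkz{\phi g}^2\leq C_-^{-1}\bkz{h^2}\,\bkz{g(-\frA)g}$, i.e. $\phi$ lies in the Kipnis--Varadhan space $H_{-1}$ (here \eqref{eq: Hypothese} also makes $h$ bounded). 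Let $f_\gl=(\gl-\frA)^{-1}\phi$. Then $\gl\bkz{f_\gl^2}\to0$ and $\grad f_\gl$ and $f_\gl\circ\tau_1-f_\gl$ converge in $L^2(\bkz{\bcdot})$ as $\gl\to0$, the limiting ``corrector'' $f$ being approximable by smooth bounded local functions. Setting $N^\gl(t)=X(t)+f_\gl(\go(t))-f_\gl(\go(0))$ gives a square integrable martingale with stationary increments such that $\lim_{\gl\to0}\limsup_{t\to\oo}t^{-1}\bbE\big[(X(t)-N^\gl(t))^2\big]=0$.

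\emph{Step 3: invariance principle and the formula for $q(\gr)$.}
Under the rescaling $X^\gep(t)=\gep X(\gep^{-2}t)$, the predictable quadratic variation of $\gep N^\gl(\gep^{-2}\bcdot)$ at time $t$ converges to $t\,q_\gl$ by the ergodic theorem, where $q_\gl$ is the stationary mean of the rate of growth of $\langle N^\gl\rangle$, namely $q_\gl=\bkz{R_1(1+f_\gl\circ\tau_1-f_\gl)^2+R_{-1}(-1+f_\gl\circ\tau_{-1}-f_\gl)^2+2|\grad f_\gl|^2}$; using shift invariance to merge the two jump contributions this equals $2\bkz{(1-f_\gl+f_\gl\circ\tau_1)^2R_1}+2\,\cE(f_\gl,f_\gl)$, and $q_\gl\to q(\gr)$. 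The functional martingale central limit theorem applied to $\gep N^\gl(\gep^{-2}\bcdot)$, together with the remainder estimate of Step 2, yields $X^\gep\Rightarrow Y$ in the Skorohod space, with tightness of $X^\gep$ coming from the martingale part and the same estimate, and ergodicity forces the limit to be a Brownian motion (not merely to have Gaussian increments). Passing to $\gl\to0$ and noting that the corrector $f$ is the minimizer of the convex functional $g\mapsto\bkz{(1-g+g\circ\tau_1)^2R_1}+\cE(g,g)$ over smooth bounded local $g$ (its Euler--Lagrange equation is precisely $\frA g=-\phi$) identifies $q(\gr)$ with \eqref{eq:q}. Positivity is then clear: $R_1\geq C_-$ and $\bkz{1-g+g\circ\tau_1}=1$ give $q(\gr)\geq 2C_->0$.

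\emph{Step 4: the main obstacle.}
The delicate part is the infinite-dimensional, unbounded-spin nature of the environment process: rigorously constructing $\frA$ and its resolvent on $L^2(\bkz{\bcdot})$, proving the $H_{-1}$ (fluctuation--dissipation) bound, giving meaning to the extension of $\cE$ to $\bbZ$, approximating the corrector by local functions, and obtaining the moment control needed both for tightness and for the ergodic theorem. These rely essentially on the uniform convexity \eqref{eq: Hypothese} and on a priori estimates for the infinite volume dynamics in the spaces $L_r^2$ of \cite{Z}; proving ergodicity of $\go(t)$ is a further point requiring care.
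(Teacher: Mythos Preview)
Your proposal is correct and follows essentially the same Kipnis--Varadhan route as the paper: environment viewed from the walk, reversibility of the product measure for the shifted generator, the $H_{-1}$ bound on the drift $j=R_1-R_{-1}$ via its gradient (telescoping) structure, and identification of $q(\gr)$ with the variational formula. Two small points where the paper differs from your sketch: ergodicity of the shifted process is obtained directly from its Dirichlet form (if $\sE(f,f)=0$ the jump term forces $f=f\circ\tau_1$, hence $f$ is constant by spatial ergodicity of the product measure), which is more direct than invoking time ergodicity of the infinite-volume Ginzburg--Landau dynamics; and to extract $q(\gr)$ the paper uses the antisymmetry of the walk \cite{MFGW} to get $\bbE\big[X(t)\int_0^t j\,ds\big]=0$, giving $q(\gr)=2\bkz{R_1}-2\|j\|_{-1}^2$ and then \eqref{eq:q}, rather than computing the quadratic variation of the combined martingale $N^\gl$ as you do.
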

This formula implies that $2C_-\leq q(\gr)  \leq 2C_+$. 
For the upper bound, set $f=1$. The lower bound follows by an application of the Cauchy-Schwarz inequality (see \cite[(4.14)]{GOS}).

\medskip

Combining Proposition \ref{Prop: identity}  and Theorem \ref{thm:homog}, we get a weaker formulation of
\eqref{eq: relaxation}.
Let $\gp$ be a  smooth function  with compact support $[-1,1]$. For any $x \in \bbR$ and $t>0$
\begin{eqnarray*}
&& \lim_{\gep \to 0}
\bkz{ V^\prime \big( \n_0 (0) \big)  ; \; \gep \sum \gp \left( \gep i \right) \n_{i + \lfloor x \gep \rfloor} \left(\frac{t}{\gep^2} \right) }\\
&& \qquad \qquad \qquad =
\frac{1}{\sqrt{2 \pi  q (\gr) \,  t}} \; \int_\bbR dy \; \gp (y-x)  \exp \left( - \frac{y^2}{2 t \; q (\gr)} \right) \, ,
\end{eqnarray*}
where $\lfloor \cdot \rfloor$ stands for the integer part.

\begin{proof}[Proof of Theorem \ref{thm:homog}]

We follow the approach of Kipnis and Varadhan (see \cite[Section 4]{GOS}) and consider the process viewed from the position of the random walk $X(t)$.
Let $\ns(t)$ denote the configuration $\n(t)$ as viewed from $X(t)$:
\[
\text{if } X(t)=(i,i+1), \qquad \ns(t)=\tau_i \n(t).
\]
Note that the edge $(0,1)$ in $\ns(t)$ corresponds to the edge $X(t)$ in $\n(t)$.

We can write the displacement of the random walk $X(t)$ as the sum of a drift term and a martingale,
\[
X(t)-X(0)= \int_0^t j(\ns(s)) ds + M(t).
\]
Here $j(\ns(s))=(R_1-R_{-1})(\ns(s))$ is the drift of the random walk $X$ at time $s$. 
The process $M(t)$ is a martingale with respect to the family of $\gs$-algebras generated by $(\n(s),X(s))_{s\in[0,t]}$;
$M(t)$ is cadlag with the same jumps as $X(t)$; $\bbE(M(t)^2)$ is therefore $t\bkz{R_{-1}+R_1}=2t\bkz{R_1}$. 
We will apply Kipnis and Varadhan's \cite[Theorem 1.8]{KV} to the drift term. To do this, there are two conditions we must check, see Lemmas \ref{KV1} and \ref{KV2}. First some notation.
Let $\Ls$ denote the generator of $\ns(t)$,
\begin{eqnarray*}
\Ls F(\n) = L F (\n) + \sum_{k=\pm1} [F(\n) - F(\tau_k \n)] R_k \, ,
\end{eqnarray*}
where $L$ stands for the generator of the Ginzburg-Landau dynamics on $\bbZ$.

Define a Dirichlet form $\sE$ by
\begin{eqnarray*}
\sE(f,f) =\bkz{ f \Ls f}.
\end{eqnarray*}
Note that by translation invariance, $\bkz{\bcdot}=\mu_\gr$ is the invariant measure corresponding to both $L$ and $\Ls$, and
\begin{eqnarray}\label{def:tE}
\sE(f,g)= \cE(f,g)+\bkz{ (f(\n)-f(\tau_1 \n))R_1(g(\n)-g(\tau_1 \n))}.
\end{eqnarray}
Define dual norms $\|\bcdot\|_1$ and $\|\bcdot\|_{-1}$ by
\begin{align*}
\|f\|_1^2=\sE(f,f), \qquad \|g\|_{-1}^2 &= \sup_f\left\{2 \bkz{fg} -\|f\|_1^2\right\}.
\end{align*}
Note that $\|g\|_{-1}=\oo$ unless $\bk{g}=0$. 
\begin{lem}\label{KV1}
The shifted process $(\ns(s))$ is time ergodic with respect to $\mu_\gr$.
\end{lem}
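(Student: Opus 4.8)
The plan is to realize the shifted process $\ns(t)$ as an "environment seen from the particle" and to deduce its ergodicity from the ergodicity of the underlying spatially homogeneous dynamics $\n(t)$ together with the irreducibility of the random walk $X(t)$. First I would recall that $\bkz{\bcdot}=\mu_\gr$ is a product measure and is reversible and ergodic for the Ginzburg--Landau dynamics $\n(t)$ on $\bbZ$ with respect to space-time shifts; this is standard for the infinite-volume dynamics constructed in \cite{Fritz, Z} when $V$ satisfies \eqref{eq: Hypothese}, since ellipticity gives a spectral gap on a scale and the product structure kills spatial correlations. The key point is that the generator $\Ls$ of $\ns(t)$ is obtained from $L$ by adding the shift part $\sum_{k=\pm1}[F(\n)-F(\tau_k\n)]R_k$, and that $\mu_\gr$ is invariant (indeed reversible, by \eqref{def:tE}) for $\Ls$; so $\ns(t)$ is a genuine stationary Markov process on the configuration space.

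The main step is to show that any bounded function $\gp$ on configuration space that is invariant under the semigroup $e^{t\Ls}$ is $\mu_\gr$-a.s.\ constant. Suppose $e^{t\Ls}\gp=\gp$. Lift $\gp$ back to the joint process $(\n(t),X(t))$: define $\Phi((i,i+1),\n)=\gp(\tau_i\n)$, a function on $\cB\times\bbR^\bbZ$. Because $\ns(t)$ is exactly $(\n(t),X(t))$ read in the moving frame, $\Phi$ is invariant under the joint semigroup generated by $\Lc$ (the generator of $(\n(t),X(t))$). Now I would exploit two features separately. Fixing the field and letting only the walk move, invariance forces $\Phi(b,\n)$ to be constant in $b$ along the orbit of $X$: since the jump rates $-\pd{b}\pd{c}H=V''(\cdot)>0$ are strictly positive for $|b-c|=1$, the walk $X(t)$ is irreducible on $\cB$ for every fixed environment, so $\Phi((i,i+1),\n)$ does not depend on $i$, i.e.\ $\gp(\tau_i\n)=\gp(\n)$ for all $i$ and $\mu_\gr$-a.e.\ $\n$; thus $\gp$ is a.s.\ invariant under spatial shifts. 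Second, a shift-invariant function that is also invariant under $e^{t\Ls}$ is invariant under $e^{tL}$ as well (the added term $\sum_k[F-F\circ\tau_k]R_k$ vanishes on shift-invariant $F$), so $\gp$ is invariant under the Ginzburg--Landau semigroup. Ergodicity of $\mu_\gr$ for the $\bbZ$-dynamics $\n(t)$ — the product measure has trivial tail, and the dynamics mixes it on each finite box — then forces $\gp$ to be $\mu_\gr$-a.s.\ constant. Hence $\ns(t)$ is ergodic.

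The hard part will be pinning down the ergodicity of $\mu_\gr$ for the infinite-volume conservative dynamics $\n(t)$ cleanly, and justifying the measurability/approximation needed to pass from the finite-range local function $\gp$ to the lift $\Phi$ on the joint space and back; one must be a little careful that invariant functions of the joint process split as claimed, because the environment and the walk position are coupled. I would handle this by the standard approximation by local functions and by using that, for the coupled process, conditioning on the full trajectory of $\n$ reduces the walk to a time-inhomogeneous but uniformly elliptic Markov chain on $\cB$, for which irreducibility is immediate from \eqref{eq: Hypothese}. The remaining ingredients — reversibility of $\mu_\gr$ under $\Ls$ and the spatial ergodicity of the product measure under the homogeneous dynamics — are routine given the results of \cite{Fritz, Z} already invoked in the excerpt.
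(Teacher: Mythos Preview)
Your strategy can be made to work, but it is considerably more elaborate than necessary and leans on a fact you yourself flag as ``the hard part'' --- time-ergodicity of the infinite-volume conservative Ginzburg--Landau dynamics under $\mu_\gr$ --- which the paper's argument sidesteps entirely.

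The paper's proof is a three-line Dirichlet-form argument. If $e^{-t\Ls}f=f$ for all $t\ge0$, then $\Ls f=0$, hence $\sE(f,f)=0$. By \eqref{def:tE} the form $\sE$ is a sum of two nonnegative pieces, and since $R_1=V''(\n_1)\ge C_-$ the shift piece alone gives
\[
0=\sE(f,f)\;\ge\; C_-\,\bkz{(f(\n)-f(\tau_1\n))^2},
\]
so $f$ is $\mu_\gr$-a.s.\ invariant under spatial translations. Because $\mu_\gr$ is a product measure it is spatially ergodic, and $f$ is constant.

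Compared with your route: (i) there is no lift to the joint process $(\n,X)$; (ii) there is no need to ``fix the field and let only the walk move'' --- a step that is delicate because the field and the walk are coupled and cannot be separated at the level of the semigroup (the clean way to make that step rigorous is precisely via the Dirichlet form of the joint process, at which point you have essentially recovered the paper's argument in disguise); and (iii) most importantly, the paper replaces your appeal to \emph{time}-ergodicity of $L$ on $\bbZ$ by the trivial \emph{spatial} ergodicity of the product measure. Your plan would ultimately go through, but the detour through the joint process and through ergodicity of $L$ is unnecessary once one notices that the shift part of $\sE$ by itself already forces translation invariance of any $\Ls$-harmonic function.
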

\begin{proof}
If $e^{-\Ls t}f=f$ for all $t\geq 0$, then $\Ls f=0$ and therefore $\sE(f,f)=0$.
As $V''\geq C_-$, 
\begin{equation}
\label{inq:tE}
0=\sE(f,f) \geq C_- \bkz{ (f(\n)-f(\tau_1 \n))^2 }.
\end{equation}
Hence $f$ is translation invariant. The stationary measure is ergodic, so $f$ is constant with probability one.
\end{proof}
\begin{lem}
\label{KV2} 
The drift function is finite in the dual norm: $\|j\|_{-1}<\oo$.
\end{lem}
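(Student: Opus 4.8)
The plan is to show $\|j\|_{-1} < \infty$ by exhibiting a single smooth bounded local function whose image under the Dirichlet form $\sE$ controls $j$, which is the standard way to verify the Kipnis--Varadhan condition. Recall that $\|j\|_{-1}^2 = \sup_f \{ 2\bkz{fj} - \sE(f,f) \}$, so it suffices to find $C < \infty$ with $|\bkz{fj}| \leq C\, \sE(f,f)^{1/2}$ for all smooth bounded local $f$. The natural candidate to ``absorb'' $j$ comes from the very structure of the generator $\Ls$: the drift $j(\ns) = (R_1 - R_{-1})(\ns)$ is, up to constants, $\Ls$ applied to the coordinate-type function measuring displacement, so one expects $j$ to be a finite linear combination of terms of the form $g(\n) - g(\tau_{\pm1}\n)$ plus something in the range of $L$.

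First I would make this explicit. Write $j(\ns) = R_1(\ns) - R_{-1}(\ns) = V''(\ns_1) - V''(\ns_0)$. Consider a primitive $W$ of $V''$, i.e. $W' = V''$ (so $W = V'$ works), and look at the local function $\phi(\n) = W(\n_0) = V'(\n_0)$. Then the discrete gradient $\phi(\tau_1\n) - \phi(\n) = V'(\n_1) - V'(\n_0)$ is not quite $j$, so instead I would exploit the integration-by-parts/reversibility structure: for any test $f$, expand $\bkz{f j}$ and use the identity \eqref{def:tE} together with the integration-by-parts formula \eqref{ibp} extended to $\bbZ$ to rewrite $\bkz{f(V''(\n_1) - V''(\n_0))}$ as a bilinear expression in the gradients $\pd{b} f$. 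Concretely, $V''(\n_1) = \pd{(0,1)}(-V'(\n_0)) $-type manipulations let one replace the factor $V''$ by a derivative hitting $f$; this produces $\bkz{f j} = \bkz{\grad \psi \cdot \grad f}$ for an explicit $\psi$ built from $V'$ supported on a few edges near the origin. Once in this form, Cauchy--Schwarz gives $|\bkz{f j}| = |\bkz{\grad\psi\cdot\grad f}| \leq \bkz{(\grad\psi)^2}^{1/2}\, \cE(f,f)^{1/2} \leq \bkz{(\grad\psi)^2}^{1/2}\, \sE(f,f)^{1/2}$, since $\sE \geq \cE$ by \eqref{def:tE} and positivity of $R_1$. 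The finiteness of $\bkz{(\grad\psi)^2}$ follows because $\psi$ is local and $V'$ has at most linear growth (from $V'' \leq C_+$), so $\bkz{(\psi(\n))^2} < \infty$ under the product Gibbs measure, whose marginals have Gaussian-type tails by \eqref{eq: Hypothese}.

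The main obstacle I anticipate is the bookkeeping in the first step: getting the algebraic identity that expresses $\bkz{fj}$ purely through $\grad f$ against an explicit local $\psi$, since $j$ lives on the \emph{shifted} configuration $\ns$ and one must be careful that translation invariance of $\bkz{\bcdot}$ lets one move freely between $\n$ and $\ns$ inside expectations — the process-viewed-from-the-walker is stationary, so $\bkz{F(\ns)} = \bkz{F(\n)}$ for functions $F$, which legitimizes treating $j$ as the ordinary function $V''(\n_1) - V''(\n_0)$ on $(\bbR^\bbZ, \mu_\gr)$. After that reduction the estimate is routine: one only needs the elementary bound $\sE(f,f) \geq \cE(f,f)$ and the $\bbL^2(\mu_\gr)$-integrability of the fixed local function $\grad\psi$, both of which are immediate from \eqref{eq: Hypothese}. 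Thus $\|j\|_{-1} \leq \bkz{(\grad\psi)^2}^{1/2} < \infty$, completing the proof.
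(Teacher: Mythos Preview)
Your overall strategy is correct: it suffices to show $|\bkz{fj}|\le C\,\sE(f,f)^{1/2}$ for all test $f$. The gap is in the execution. You claim integration by parts yields $\bkz{fj}=\bkz{\grad\psi\cdot\grad f}=\cE(\psi,f)$ for an explicit \emph{local} $\psi$ built from $V'$, and then bound via $\sE\ge\cE$. But such an identity would force $L\psi=j$ (since $\cE(\psi,f)=\bkz{f\,L\psi}$ for all $f$), and there is no reason the Poisson equation $L\psi=V''(\n_1)-V''(\n_0)$ admits a local solution. Concretely, when you apply \eqref{ibp} to $\bkz{f\,\pd{(0,1)}g}$ with $g=V'(\n_1)+V'(\n_0)$ (so that $\pd{(0,1)}g=j$), the boundary term $\bkz{fg\,\pd{(0,1)}H}=\bkz{f\,(V'(\n_1)^2-V'(\n_0)^2)}$ does not vanish and is not of the form $\cE(\cdot,f)$; it is a shift difference in $f$, controllable only by the \emph{jump} part of $\sE$. (Incidentally, the sample computation you wrote, $V''(\n_1)=\pd{(0,1)}(-V'(\n_0))$, is off: the right-hand side equals $V''(\n_0)$.)

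The paper's argument bypasses this entirely by using the jump part of $\sE$ rather than $\cE$. By translation invariance, $\bkz{fR_{-1}}=\bkz{f(\tau_1\n)R_1}$, so
\[
\bkz{fj}=\bkz{R_1\bigl(f(\n)-f(\tau_1\n)\bigr)}\le \bkz{R_1^2}^{1/2}\,\bkz{(f(\n)-f(\tau_1\n))^2}^{1/2}\le C_+\,C_-^{-1/2}\,\|f\|_1,
\]
using $R_1\le C_+$ and $\sE(f,f)\ge C_-\bkz{(f(\n)-f(\tau_1\n))^2}$ from \eqref{def:tE}. The moral is that the drift $j$ is produced by the random-walk jumps in $\Ls$, so it is the jump contribution to $\sE$, not the diffusive form $\cE$, that naturally absorbs it.
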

\begin{proof}
A sufficient condition \cite[1.14]{KV} for $\|j\|_{-1}<\oo$ is that, 
\[
\forall f\in\cD(\Ls), \qquad \bkz{ f j } \leq C \|f\|_1.
\]
By \eqref{inq:tE} and translation invariance,
\begin{align*}
\bkz{  f(\n) j(\n) }  &= \sum_{k=\pm1} \bkz{  f(\n) k R_k} = \bkz{ R_1 (f(\n)-f(\tau_1 \n))}\\
&\leq  \bkz{ R_1^2}^{1/2} \bkz{  (f(\n)-f(\tau_1 \n))^2} ^{1/2}\\
&\leq C_+ \bkz{(f(\n)-f(\tau_1 \n))^2} ^{1/2}\\
&\leq C_+ C_-^{-1/2} \|f\|_1.\qedhere
\end{align*}
\end{proof}
We now seek to determine the diffusion coefficient $q(\gr)$. The random walk $X(t)$ is antisymmetric \cite{MFGW}; on the time interval $[0,T]$, the law of $(X(t),j(t))$ is equal to the law of $(X(T-t)-X(T),j(T-t))$, but the displacement of the random walk part is in the opposite direction. This symmetry implies that,
\[
\bbE\left[ X(t) \int_0^t j(s) ds\right] = 0.
\]
This allows us to expand $\frac1t\bbE(M_t^2)= \frac1t \bbE((X(t)-\int_0^t j(\ns(s)) \, ds)^2)$:
\[
2\bkz{R_1}= \frac1t \bbE( X(t)^2)  + \frac1t\bbE\left[ \left(\int_0^t j(\ns(s))ds\right)^2\right].
\] 
By \cite[Remark 1.7 and Theorem 1.8]{KV},
\[
\lim_{t \to \infty} \frac{1}{t} \bbE \left[ \left( \int_0^t j(\ns(s)) ds \right)^2 \right]=2\|j\|_{-1}^2.
\]
By $\eqref{def:tE}$, 
\begin{align*}
&q(\gr) = 2 \bkz{R_1}  -2\sup_f\left\{2\bkz{ f j(\n) }  - \sE(f,f)  \right\}\\
&=  2\inf_f\left\{\bkz{R_1}-2\bkz{ f (R_{1}-R_{-1}) }  +\bkz{ (f(\n)-f(\tau_1 \n))^2 R_1} +\cE(f,f) 
\right\}
\end{align*}
The variational formula \eqref{eq:q} now follows as $\bkz{fR_{-1}}=\bkz{f(\tau_1\n)R_1}$.
\end{proof}

\section{Conclusion}

In this paper, we derived the Helffer-Sj\"ostrand representation for the equilibrium correlations in canonical Gibbs measures. 
For a class of one-dimensional Hamiltonian, this representation can be reinterpreted in terms of a stochastic process
describing the joint evolution of the conservative Ginzburg-Landau dynamics and a random walk coupled to this dynamics.
Using the random walk analogy, the diffusive relaxation of the Ginzburg-Landau dynamics 
can be related to the diffusive behavior of this random walk (Proposition \ref{Prop: identity}). 
In this way several bounds on the return to equilibrium for the Ginzburg-Landau dynamics are obtained 
\eqref{eq: Aronson upper}, \eqref{eq: Aronson lower}.

To sharpen the estimates in this paper and to derive the precise relaxation to equilibrium conjectured in \eqref{eq: relaxation},
one would need to prove a local central limit theorem for the random walk in the Helffer-Sj\"ostrand representation.
This  seems to be a challenging task.
It would be also interesting to obtain a stochastic interpretation of the Witten Laplacian in higher dimensions.
This would provide a straightforward approach to derive relaxation bounds for Ginzburg-Landau dynamics
 in dimension $d \geq 2$.

\appendix
\section{Spectral Gap}

Sharp bounds on the spectral gap have been derived for conservative
Ginzburg-Landau type dynamics in \cite{LPY, Chafai, Caputo}.
In this appendix, we show how to recover these bounds by using the
Witten Laplacian formalism when the potential is strictly convex. We
will follow the probabilistic approach devised in \cite{Ledoux}.

\medskip

In this appendix, we will relax some assumptions on the dynamics and we suppose that it is defined in dimension $d \geq 1$.
Let $\gL$ denote the $d$-dimensional torus $(\bbZ/N\bbZ)^d$.
Let $\cB$ denote the set of oriented nearest neighbor edges of $\gL$; how the edges are oriented will not be important. 
With $\grad=(\pd{b})_{b\in\cB}$, the definitions of $\Lg$ and $\Lc$ extend naturally to this higher dimensional setting.
Consider the Hamiltonian
\[
H(\n)=H_1(\n)+H_2(\n)= \sum_x V_1(\n_x) + \sum_{(x,y)\in\cB} V_2(\n_x+\n_y) \,.
\]
We will assume that $V_1$ is strictly convex, with $V_1''\geq C_->0$,
and that $V_2$ is convex.

Let $\gl=\gl(\gr,N,d)$ denote the spectral gap of the operator $\Lg$
with respect to $\mu_{\gr,N}$,
\[
\gl = \inf_{f  \perp 1} \frac{\cE(f,f)}{\bk{f ; f}}.
\]
The Dirichlet form $\cE$ was introduced in \eqref{def:dirichlet_form}.
Define also 
\[
\tilde \gl = \inf_{f  \perp 1} \frac{\bk{ \grad f \, \cdot \,  \Lc \grad f}}{\bk{ \grad f \, \cdot \,  \grad f}}.
\]

\begin{thm}
\label{thm: spectral gap}
There is a constant $k$ such that for all $N$ and $\gr$, 
\[
\gl \geq \tilde \gl \geq  \frac{kC_-}{N^2} \, .
\]
\end{thm}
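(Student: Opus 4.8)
The plan is to prove the two inequalities $\gl \geq \ns\gl$ and $\ns\gl \geq kC_-/N^2$ separately, following the probabilistic approach of \cite{Ledoux} via the commutation identity \eqref{eq: commutateur}. For the first inequality, I would start from the spectral characterization: given $f \perp 1$, let $u$ solve $\Lg u = f - \bk{f}$, so that $\bk{f;f} = \cE(f,u) = \bk{\grad f \cdot \grad u}$. Using \eqref{eq: commutateur}, $\grad f = \Lc \grad u$, hence $\bk{f;f} = \bk{\grad u \cdot \Lc \grad u}$ and also $\cE(f,f) = \bk{\grad f \cdot \grad f} = \bk{\Lc\grad u \cdot \Lc\grad u}$. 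Applying the definition of $\ns\gl$ to the (gradient) test vector $\grad u$ together with Cauchy--Schwarz in the inner product induced by $\Lc$ (which is positive since $\Hess H \geq 0$ under the convexity assumptions and $\Lg \otimes \mathrm{Id} \geq 0$), one gets $\ns\gl\, \bk{\grad u \cdot \Lc \grad u} \leq \bk{\Lc\grad u \cdot \Lc \grad u}$, i.e.\ $\ns\gl\, \bk{f;f} \leq \cE(f,f)$, which gives $\gl \geq \ns\gl$. (One must check that the space of gradients is $\Lc$-invariant — already noted in the excerpt — so that restricting the variational problem for $\ns\gl$ to gradient fields is legitimate here.)

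For the second inequality, the key point is to bound $\Lc$ from below on gradient fields. Write $\Lc = \Lg \otimes \mathrm{Id} + \Hess H$. Since $\Lg \otimes \mathrm{Id} \geq 0$, for any $U \in \cC^\infty_0(\cS_{\gr,N},\bbR^\cB)$ we have $\bk{U \cdot \Lc U} \geq \bk{U \cdot (\Hess H) U}$. The Hessian $\Hess H = [\pd{b}\pd{c}H]$ decomposes as the sum of the contribution of $H_1$, which is diagonal with entries $V_1''(\n_x) \geq C_-$ summed over the (two) edges at $x$ — hence bounded below by $C_-$ times a fixed combinatorial matrix — and the contribution of $H_2$, which is positive semidefinite because $V_2$ is convex (it is a sum of rank-one-type positive blocks). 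Thus $\bk{U \cdot \Lc U} \geq C_- \bk{U \cdot \Delta_{\cB} U}$ where $\Delta_{\cB}$ is, up to constants, the (edge) graph Laplacian of $\gL$; combined with the obvious upper bound $\bk{U \cdot U} \leq C \bk{U \cdot \Delta_{\cB} U}^{?}$ — no: rather, one uses that on $\bbR^\cB$ the relevant inequality is a discrete Poincaré/Sobolev inequality on the torus $\gL$, which yields $\bk{U \cdot \Delta_{\cB} U} \geq (c/N^2)\bk{U \cdot U}$ only after projecting out the kernel. This is where care is needed: the matrix coming from $H_1$ alone is \emph{not} invertible (constant edge-fields are in its kernel when $d\geq 2$, and even in $d=1$ there is a kernel), so one cannot directly get a gap of order $C_-/N^2$ from $H_1$ pointwise.

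The resolution — and the main obstacle — is that one does not need $\Hess H$ to be coercive on all of $\bbR^\cB$, only on the subspace of \emph{gradient fields} $\grad u$, and there the missing coercivity is supplied by the term $\Lg \otimes \mathrm{Id}$: concretely, $\bk{\grad u \cdot \Lc \grad u} = \cE(u, \Lg u) + \bk{\grad u \cdot (\Hess H)\grad u} \geq \bk{(\Lg u)^2}$, and combining the two lower bounds (the $H_1$-coercivity in the directions transverse to the kernel, and $\bk{(\Lg u)^2}$ controlling the component of $\grad u$ along the "constant" direction after a discrete Poincaré inequality on $\gL$ relating $\bk{(\Lg u)^2}$ to $\bk{\grad u \cdot \grad u}/N^2$) closes the estimate. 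I would therefore: (i) fix $f \perp 1$, set $U = \grad u$ with $\Lg u = f - \bk{f}$; (ii) write $\bk{U \cdot \Lc U} \geq \alpha \bk{(\Lg u)^2} + (1-\alpha)C_- \bk{U \cdot M U}$ for a suitable combinatorial $M$ and $\alpha \in (0,1)$; (iii) apply the elementary spectral-gap bound $\sim 1/N^2$ for the torus graph Laplacian to the term $\bk{(\Lg u)^2} = \cE(u,\Lg u)$ versus $\bk{U\cdot U} = \cE(u,u)$-type quantities, obtaining $\bk{U \cdot \Lc U} \geq (kC_-/N^2)\bk{U \cdot U}$; (iv) read off $\ns\gl \geq kC_-/N^2$. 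The delicate bookkeeping is the handling of the kernel of the $H_1$-Hessian via the drift term, and verifying that the constant $k$ can be chosen uniformly in $\gr$ and $N$ (and $d$), which follows since all the combinatorial matrices involved and the torus Poincaré constant depend on $N$ only through the stated $N^{-2}$ scaling.
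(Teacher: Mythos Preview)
Your argument for $\gl \geq \tilde\gl$ is correct, though the paper presents it via semigroup decay rather than an operator inequality: it writes $\bk{f;f}=\int_0^\infty \cE(P_{t/2}f,P_{t/2}f)\,dt$, shows $F(t)=\cE(P_tf,P_tf)$ satisfies $F'(t)=-2\bk{\grad P_tf\cdot\Lc\grad P_tf}\leq -2\tilde\gl F(t)$, and integrates. Your direct route through $\Lc^2\geq\tilde\gl\,\Lc$ on gradient fields is equally valid.

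For $\tilde\gl\geq kC_-/N^2$, however, there is a genuine gap. Your proposed rescue via the $\Lg\otimes\mathrm{Id}$ term is circular: by the commutation identity, with $U=\grad u$ one has
\[
\bk{U\cdot\Lc U}=\bk{\grad u\cdot\grad\Lg u}=\cE(u,\Lg u)=\bk{(\Lg u)^2}
\]
\emph{exactly}, so asking whether $\bk{(\Lg u)^2}\geq (k/N^2)\,\bk{U\cdot U}$ is literally the inequality $\tilde\gl\geq k/N^2$ that you are trying to prove. (Incidentally, $\bk{U\cdot(\Lg\otimes\mathrm{Id})U}=\sum_b\cE(\pd{b}u,\pd{b}u)$, not $\cE(u,\Lg u)$ as you wrote.)

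The real point is that your worry about the kernel of $\Hess H_1$ is misplaced once you restrict to gradient fields. The paper simply discards $\Lg\otimes\mathrm{Id}\geq 0$ and $\Hess H_2\geq 0$ and parametrizes $F=\grad f$ by $\gp_x=\partial f/\partial\n_x$, so that $F_{(x,y)}=\gp_y-\gp_x$. Then, pointwise in $\n$,
\[
F\cdot(\Hess H_1)F\;\geq\;C_-\sum_x\bigl((Q\gp)_x\bigr)^2,
\qquad
F\cdot F\;=\;-\sum_x\gp_x\,(Q\gp)_x,
\]
with $Q$ the generator of simple random walk on $\gL$. Both quadratic forms vanish on constant $\gp$, so the kernel drops out of the ratio automatically: expanding $\gp=\sum_i a_i\psi_i$ in the eigenbasis of $Q$ (eigenvalues $0=\lambda_1>\lambda_2\geq\cdots$) gives
\[
\frac{\sum_{i\geq 2}a_i^2\lambda_i^2}{\sum_{i\geq 2}a_i^2|\lambda_i|}\;\geq\;|\lambda_2|\;\geq\;\frac{k}{N^2}.
\]
This purely combinatorial inequality is the missing step; no contribution from $\Lg\otimes\mathrm{Id}$ is needed.
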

The scaling $N^2$ is of the correct order as it characterizes the diffusive behavior of the conservative dynamics.
The assumption of strict convexity should only be seen as a limitation in the method of proof.
\begin{proof}
We will first show that $\gl\geq\tilde\gl$. 
Recall that $P_t=e^{-t\Lg}$ denotes the semi-group associated with the dynamics. 
As $P_0 f=f$ and $P_\oo f = \bk{f}$,
\begin{align*}
\bk{f;f}&=\bk{f[P_0 f - P_\oo f]}=\int_{0}^\oo \bk{f  \Lg P_t f}dt\\
&=\int_{0}^\oo \bk{P_{t/2}f  \Lg P_{t/2} f}dt
= \int_{0}^\oo \cE(P_{t/2}f,P_{t/2}f)dt \, .
\end{align*}
Let $F(t)=\cE(P_t f,P_tf)$. Then by \eqref{eq: commutateur}
\[
F'(t)=  -2 \bk{\grad P_t f \cdot \grad \Lg P_t f }=-2\bk{ \grad P_t f \cdot \Lc \grad P_t f }.
\]
For any $u$,
\[
\bk{ \grad u \cdot \Lc \grad u }
\geq \tilde\gl \bk{\grad u \cdot \grad u}
=\tilde\gl \cE(u,u).
\]
With $u=P_t f$, $F(t)\leq \exp(-2t\tilde\gl)F(0)$ and the bound on $\gl$ follows:
\[
\bk{f;f} = \int_0^\oo F(t/2) dt \leq \int_0^\oo \exp(-t\tilde\gl) F(0)
dt = \tilde\gl^{-1}\cE(f,f).
\]
\medskip

\noindent
We will now show the lower bound for $\tilde\gl$. With $F=\grad f$ we need,
\[
\bk{ F \cdot \Lc F }
=\bk{ F \cdot \Lg \otimes {\rm Id}\, F } + \bk{ F \cdot (\Hess H) F} \ge \frac{kC_-}{N^2}\bk{F\cdot F}.
\]
The term $\bk{ F \cdot \Lg \otimes {\rm Id}\, F }$ is equal to $\sum_b
\cE(\partial_b f,\partial_b f)\geq 0$, thus
\begin{eqnarray*}
\bk{ F \cdot \Lc F }
\geq
\bk{ F \cdot (\Hess H) F}
\end{eqnarray*}
Let $S$ denote either $\{x\}$ (for $x\in\gL$) or $\{x,y\}$ (for $(x,y)\in\cB$).
If $S=\{x\}$, let $W=V_1(\n_x)$; if $S=\{x,y\}$, let $W= V_2(\n_x+\n_y)$. 
Let $v_S(b)=+1$ if $b$ points in to $S$, let $v_S(b)=-1$ if $b$ points out from $S$, and let $v_S(b)=0$, if $b$ points neither into or out from $S$. Note that $\pd{b}\pd{c}W=v_S(b)v_S(c)W''$ and
\[
F\cdot (\Hess W) F \geq C_- \biggl(\sum_{b} v_S(b) \pd{b} f\biggr)^2,
\]
Hence, 
\begin{eqnarray*}
F \cdot (\Hess H_1) F &\geq&
C_- \sum_{x}  \left( \sum_{i=1}^d\pd{(x,x+e_i)}f -\pd{(x-e_i,x)}f \right)^2
\end{eqnarray*}
and $F \cdot (\Hess H_2) F\geq 0$.
It is sufficient now to show that,
\begin{equation}
\label{eq:spectral_gap}
\sum_x  \left( \sum_{i=1}^d\pd{(x,x+e_i)}f -\pd{(x-e_i,x)}f \right)^2
\geq \frac{k}{N^2} (F\cdot F)\,.
\end{equation}
Let $\gp_x=\partial f /\partial \n_x$, 
and let 
\[
g(\gp)= \sum_x \left( \sum_{i=1}^d -\gp_{x-e_i}+2\gp_x -
\gp_{x+e_i}\right)^2, \qquad h(\gp) = \sum_{(x,y)\in\cB} (\gp_x
-\gp_y)^2.
\]
Inequality \eqref{eq:spectral_gap} is equivalent to $g(\gp)\geq (k/N^2) h(\gp)$.
Let $Q=[q_{xy}]$ denote the generator matrix for the rate-$1$ nearest-neighbor simple random walk on $\gL$: $q_{xy}=1$ iff $x\sim y$ and $\sum_y q_{xy}=0$.
$Q$ has $|\gL|$ eigenvalues $0=\lambda_1>\lambda_2\geq \dots \geq\lambda_{|\gL|}$, and corresponding eigenvectors $\psi_1,\dots,\psi_{|\gL|}$. 
$Q$ has a uniform stationary distribution, so we can assume that the eigenvectors form an orthonormal basis for $\ell_2(\bbR^{\gL})$. 
The spectral gap of the walk $|\lambda_2|$ is known to be at least $k/N^2$ with $k$ a constant.
Write $\gp=\sum_{i=1}^{|\gL|} a_i \psi_i$. 
Then
\[
g\left(\gp\right) = \sum_x (\gp Q) (x)^2 = \sum_{i=2}^{|\gL|} a_i^2\lambda_i^2,
\]
and
\[
h(\gp) = -\sum_x \gp(x)  (\gp Q) (x) \geq |\lambda_2| \sum_{i=2}^{|\gL|} a_i^2\,, 
\]
and so $g(\gp)\geq |\lambda_2| h(\gp)$.
\end{proof}

\end{document}